\title{
\LARGE{ 
\textbf{
Local Convergence Behaviour of Generalized Gauss-Newton Multiple Shooting, Single Shooting and Differential Dynamic Programming}}}
\author{
	Katrin Baumgärtner, Florian Messerer, Moritz Diehl
% <-this % stops a space
	\thanks{This research was supported by DFG via Research Unit FOR 2401 and project 424107692 and by the EU via ELO-X 953348.}% <-this % stops a space
	\thanks{Katrin Baumg{\"a}rtner and Florian Messerer are with the Department of Microsystems Engineering (IMTEK) and Moritz Diehl is with the Department of Microsystems Engineering (IMTEK) and Department of Mathematics, University Freiburg, 79110 Freiburg, Germany.}%
	\thanks{katrin.baumgaertner@imtek.uni-freiburg.de}%
}
\newcommand{\gray}[1]{\textcolor{gray}{#1}}
\DeclareMathSymbol{\shortminus}{\mathbin}{AMSa}{"39}
\newcommand{\T}{^{\!\top}\!}
\newcommand{\I}{\mathbb{I}}
\newcommand{\inv}{^{\shortminus 1}}
\newcommand{\R}{\mathbb{R}}
\DeclareMathAlphabet{\mymathbb}{U}{bbold}{m}{n}
\newcommand{\zeros}{\mymathbb{0}}
\newcommand{\bA}{\bar{A}}
\newcommand{\bB}{\bar{B}}
\newcommand{\ms}{\textsc{ms}}
\newcommand{\sss}{\textsc{ss}}
\newcommand{\ddp}{\textsc{ddp}}
\newcommand{\xzero}{\bar{\bar{x}}_0}
\newcommand{\dpartial}[2]{\frac{\partial #1}{\partial #2}}
\newcommand{\dtotal}[2]{\frac{\mathrm{d} #1}{\mathrm{d} #2}}
\newcommand{\bw}{\bar{w}}
\newcommand{\by}{\bar{y}}
\newcommand{\bz}{\bar{z}}
\newcommand{\bu}{\bar{u}}
\newcommand{\bx}{\bar{x}}
\newcommand{\blam}{\bar{\lambda}}
\newcommand{\MS}{multiple shooting }
\newcommand{\CMS}{Multiple shooting }
\renewcommand{\SS}{single shooting }
\newcommand{\CSS}{Single shooting }
\newcommand{\wildcard}{\star}
\newcommand{\grad}[1]{\nabla_{\! #1}}
\newcommand{\bigO}{\mathcal{O}}
\newcommand{\sm}{\shortminus}
\newcommand{\ggn}{\textsc{ggn}}
\newcommand{\exact}{\textsc{eh}}
\renewcommand{\nu}{{n_u}}
\newcommand{\Flin}{\hat{F}}
\newcommand{\qua}{\mathrm{quad}}
\newtheorem{theorem}{Theorem}
\newtheorem{lemma}{Lemma}
\newtheorem{remark}{Remark}
\newtheorem{corollary}{Corollary}
\newtheorem{proposition}{Proposition}
\theoremstyle{definition}
\newcommand*\bigcdot{\mathpalette\bigcdot@{.7}}
\newcommand*\bigcdot@[2]{\mathbin{\vcenter{\hbox{\scalebox{#2}{$\m@th#1\bullet$}}}}}
\renewcommand*\env@matrix[1][c]{\hskip -\arraycolsep
  \let\@ifnextchar\new@ifnextchar
  \array{*\c@MaxMatrixCols #1}}
\begin{document}

\maketitle
\thispagestyle{empty}
\pagestyle{empty}

%%%%%%%%%%%%%%%%%%%%%%%%%%%%%%%%%%%%%%%%%%%%%%%%%%%%%%%%%%%%%%%%%%%%%%%%%%%%%%%%
\begin{abstract}
We revisit three classical numerical methods for solving unconstrained optimal control problems
--
multiple shooting, single shooting, and differential dynamic programming
--
and examine their local convergence behaviour.
In particular, we show that all three methods converge with the same linear rate if a Gauss-Newton (GN), or more general a Generalized Gauss-Newton (GGN), Hessian approximation is used, which is the case in widely used implementations such as iLQR.

\end{abstract}

%%%%%%%%%%%%%%%%%%%%%%%%%%%%%%%%%%%%%%%%%%%%%%%%%%%%%%%%%%%%%%%%%%%%%%%%%%%%%%%%
\section{Introduction}

Multiple shooting (MS), single shooting (SS) and differential dynamic programming (DDP) are three numerical methods that might be used for solving discrete optimal control problems (OCP) that typically arise after discretization of a continuous-time OCP and take the form:
\begin{mini!}|s|
	{\scriptstyle{x, u} }
	{ \sum_{i=0}^{N-1} l_i(x_i, u_i) + l_N(x_N) }
	{\label{eq:nlp}}{}
	\addConstraint{x_0}{= \xzero}
	\addConstraint{x_{i+1} }{= f_i(x_i, u_i),}{~i=0, \ldots, N-1,}
\end{mini!}
with states $x = (x_0, \ldots, x_N)$, $x_i \in \R^{n_x}$, controls $u = (u_0, \ldots, u_{N-1})$, $u_i  \in \R^{n_u}$ and a given initial state $\xzero$.

This multiple shooting formulation as given in \eqref{eq:nlp} keeps both the controls and the states as optimization variables.
Within the  Sequential Quadratic Programming (SQP) framework, the corresponding quadratic subproblems can be solved efficiently via the Riccati recursion due to the special structure of the OCP  \cite{Rao1998}.

While \MS is a simultaneous approach -- it solves the simulation and optimization problem simultaneously -- both single shooting and DDP can be considered sequential approaches.
%In direct approaches to optimal control, infinite-dimensional control problems are transcribed into finite-dimensional nonlinear programs (NLP). 
%Two common transcription approaches are direct single shooting (SS) and direct Multiple Shooting (MS).
%Another approach, which is particularly common in robotics, is differentiable dynamic programming.
%\CMS parameterizes both the control and the state trajectory and introduces additional equality constraints to ensure continuity of the state trajectory.
\CSS eliminates the states via forward simulation and keeps only the control inputs as optimization variables yielding an unconstrained nonlinear program (NLP).
If \SS  is implemented in a sparsity-exploiting fashion, quadratic subproblems with the same sparse structure as in \MS need to be solved \cite{Sideris2005}.
Based on the controls obtained from the solution of this subproblem, an additional open-loop simulation of the nonlinear system dynamics needs to be performed.
Similarly, DDP can be implemented by first performing a Riccati recursion based on the very same quadratic subproblem and then simulating the nonlinear system forward in time.
In contrast to the open-loop simulation performed within single shooting, DDP leverages the time-varying affine feedback law that is obtained from the Riccati recursion within the nonlinear forward simulation.

Depending on the choice of Hessian approximation that is chosen for the quadratic subproblems, the three methods come in different algorithmic variants.
Assuming convex stage and terminal costs, we consider two common Hessian approximations:
exact Hessian (EH) and the Generalized Gauss-Newton (GGN) Hessian approximation.
The GGN Hessian is a generalization of the Gauss-Newton (GN) Hessian, which is widely used in case of quadratic stage and terminal costs, to general convex cost functions \cite{Schraudolph2002, Messerer2021a}.
Assuming that the iterates converge, all three methods locally converge with a quadratic rate if the exact Hessian is used.
With a GGN Hessian approximation, the local convergence rate is in general linear and, as we will show in the following, the asymptotic rate of convergence is the same for all three methods.

\subsection{Contribution \& Outline}

The contribution of this paper is to provide a unified view on multiple shooting, \SS and DDP from a numerical optimization perspective.
In particular, we show that the GGN variants of the three methods locally converge at the same linear rate.
This rate can be exactly characterized as the smallest scalar that satisfies two linear matrix inequalities.

After providing an overview on related work in the next paragraph, we briefly recall the three algorithms in Section~\ref{sec:methods} highlighting their similarities and differences. 
Section~\ref{sec:convergence} analyzes the local convergence behaviour of the three methods, which is illustrated on a simple example in Section~\ref{sec:examples}.

%While this result has previously been shwon for \MS \cite{Messerer2021a}, the asymptotic local contraction rate of GGN-DDP has not been analyzed.

%\begin{itemize}
%	\item local linear contraction rate of GGN-MS, GGN-SS, GGN-DDP 
%	\item DDP convergence criterion as in \cite{Messerer2021a} and exact rate
%\end{itemize}

\subsection{Related Work}

The DDP algorithm using exact Hessians was originally proposed by Mayne in 1966 \cite{Mayne1966} and further analyzed in \cite{Jacobson1970}.
Proofs for quadratic convergence of DDP were first given in 1984 by \cite{Murray1984} and \cite{Pantoja1984}.
In 1990, Shoemaker and Liao provided a proof based on Bellman's principle of optimality \cite{Shoemaker1990}.

Its Gauss-Newton variant, which is more commonly referred to as \textit{iterative Linear Quadratic Regulator} (iLQR), especially within the robotics community \cite{Tassa2014}, has been introduced in \cite{Li2004a, Todorov2005}.
%To the best knowledge of the authors, the convergence behaviour of GGN-DDP/iLQR has not yet been discussed in detail.

The sparsity-exploiting implementation of single shooting, which we consider here, has first been introduced in \cite{Sideris2005, Sideris2005a} using a Gauss-Newton Hessian. 
The Gauss-Newton variant has also been analyzed more recently in \cite{Roulet2019}.

In the context of direct optimal control, \MS was first suggested by Bock in 1984  \cite{Bock1984}.
Even earlier, the \MS approach has been discussed for parameter identification  \cite{Bock1983} and boundary value problems \cite{Osborne1969}.
%, where already one of the main advantages of multiple shooting, which is easy problem initialization, 
%is discussed.

The quadratic convergence behaviour of the exact Hessian variant of both multiple and single shooting directly follows from the analysis of Newton's method.
For the Gauss-Newton variants, local linear convergence has first been analyzed in \cite{Bock1983}.
For the Generalized Gauss-Newton variants, we refer to \cite{Messerer2021a} for a detailed analysis.

For multiple and single shooting, the exact characterization of their local contraction rate follows directly from the results in \cite{Messerer2021a, Messerer2020}, where general Sequential Convex Programming and Generalized Gauss-Newton methods are considered.

In \cite{Giftthaler2018a}, a family of Gauss-Newton shooting methods is introduced that combine the multiple shooting approach on a coarse discretization grid with Gauss-Newton DDP or Gauss-Newton single shooting, which is performed on a fine discretization grid within each multiple shooting interval.
Our analysis can be easily extended to this family of algorithms.

In \cite{Albersmeyer2010}, the local convergence behaviour of \MS and \SS with exact Hessians has been discussed in a simplified setting, which shows different quadratic rates for the two methods.

\section{Unconstrained Optimal Control Problem and Numerical Methods}
\label{sec:methods}

In this section, we briefly recall the three numerical methods and point out their similarities and differences.

We consider optimal control problems of the form given in \eqref{eq:nlp},
%\begin{mini}|s|
%	{\scriptstyle{x, u} }
%	{ \sum_{i=0}^{N-1} l_i(x_i, u_i) + l_N(x_N) }
%	{\label{eq:nlp}}{}
%	\addConstraint{x_0}{= \xzero}
%	\addConstraint{x_{i+1} }{= f(x_i, u_i),}{~i=0, \ldots, N-1,}
%\end{mini}
%with optimization variables $x = (x_0, \ldots, x_N)$, $x_i \in \R^{n_x}$, $u = (u_0, \ldots, u_{N-1})$, $u_i  \in \R^{n_u}$.
where we assume that the stage costs $l_i$, as well as the terminal cost $l_N$ are convex.
If we linearize the dynamics and approximate the objective by a quadratic function at the current iterate $(\bx, \bu)$ – or $(\bx, \bu, \blam)$, where $\blam$ are the dual variables, for the exact Hessian variant –, we obtain an equality constrained Quadratic Program (QP),
\begin{mini!}|s|
	{ \scriptstyle{x, u}}
	{ 
		\sum_{i=0}^{N-1} 
		\begin{bmatrix}
			q_i^\wildcard  \\ r_i^\wildcard 
		\end{bmatrix}\T \!
		\begin{bmatrix}
			x_i \\ u_i
		\end{bmatrix} 
		+ \frac{1}{2} 
		\begin{bmatrix}
			x_i \\ u_i
		\end{bmatrix}\T \!
		\begin{bmatrix}
			Q_i^\wildcard & (S_i^\wildcard)\T \\ 
			S_i^\wildcard & R_i^\wildcard
		\end{bmatrix} \!
		\begin{bmatrix}
			x_i \\ u_i
		\end{bmatrix} \nonumber
	}
	{\label{eq:qp}}{}
	\breakObjective{
		\qquad +p_N\T\, x_{N} + \frac{1}{2} x_{N}\T P_N x_{N} }
	\addConstraint{x_0}{= \xzero}
	\addConstraint{x_{i+1} }{= a_i + A_i x_i + B_i u_i ,~i=0, \ldots, N-1,}
\end{mini!}
where the linearized dynamics are given by
\begin{align} 
A_i &= \dpartial{f_i}{x_i}(\bx_i, \bu_i), \quad 
B_i = \dpartial{f_i}{u_i}(\bx_i, \bu_i), 
\label{eq:linA} \\
a_i &= f_i(\bx_i, \bu_i) - A_i \bx_i - B_i \bu_i.
\label{eq:lina}
\end{align}
The cost gradients are
\begin{align}
q_i^\wildcard  &= \grad{x_i} l(\bx_i, \bu_i) - Q_i^\wildcard  \bx_i - (S_i^{\wildcard})^{\top} \bu_i,  
\label{eq:linq}\\
r_i^\wildcard  &= \grad{u_i} l(\bx_i, \bu_i) -  S_i^\wildcard  \bx_i - R_i^{\wildcard} \bu_i, 
\label{eq:linr} \\
p_N &= \grad{x_N} l_N(\bx_N) - P_N \bx_N.
\label{eq:linpN}
\end{align}

The Hessian associated with the terminal stage is given by
\begin{align}
P_N &= \grad{x_i}^2 l_N(\bx_N).
\label{eq:linPN}
\end{align}
The Hessian blocks for the all other stages are given by
\begin{align}\label{eq:linQRSggn}
\begin{bmatrix}
Q_i^\ggn & (S_i^\ggn)\T \\ 
S_i^\ggn & R_i^\ggn
\end{bmatrix}
= \grad{(x_i, u_i)}^2 ~ l_i(\bx_i, \bu_i)
\end{align}
if a Generalized Gauss-Newton Hessian approximation is used, and by
\begin{align} \label{eq:linQRSeh}
\begin{bmatrix}
Q_i^\exact & (S_i^\exact)\T \\ 
S_i^\exact & R_i^\exact
\end{bmatrix}
\!\!&=\! \grad{(x_i, u_i)}^2 \left(l_i(\bx_i, \bu_i) \!+\! \blam_{i+1}\T f_i(\bx_i, \bu_i) \right)
\end{align}
if the exact Hessian is used.
Note that the dual variables $\bar \lambda_i$, associated with the equality constraints in \eqref{eq:nlp}, are required if an exact Hessian is used, while they need not be computed for the GN and GGN variant. 

%\begin{table}
%	\centering
%	\caption{Overview of multiple shooting, \SS and DDP variants for solving OCP \eqref{eq:nlp}.}
%	\label{tab:overview}
%	\begin{tabular}{cc|@{\hspace*{10pt}}c@{\hspace*{15pt}}c@{\hspace*{15pt}}c@{\hspace*{10pt}}}
%		& & \multicolumn{3}{c}{\textsc{Forward Sweep}\hspace*{15pt}} \\[5pt] 
%		& & affine & nonlinear  & nonlinear  \\
%		& & & open-loop & closed-loop \\
%		\midrule
%		\multirow{7}{*}{\STAB{\rotatebox[origin=c]{90}{\textsc{Hessian}}}}& \\
%		& exact & MS & SS  & DDP \cite{Mayne1966, Jacobson1970} \\ 
%		& \\
%		& \multirow{2}{*}{GN}
%		& GN-MS 
%		& \multirowcell{2}{GN-SS  \\ SLQ \cite{Sideris2005, Sideris2005a}, \cite{Roulet2019}}
%		& \multirowcell{2}{GN-DDP \\ iLQR \cite{Li2004a, Todorov2005}} \\
%		& \\[8pt]
%		& GGN & GGN-MS & GGN-SS  & GGN-DDP \\
%	\end{tabular}
%\end{table}

\begin{table*}
\vspace*{10pt}
\caption{Backward and forward sweep of DDP, \MS and \SS in comparison.}
\label{tab:overview}
\vspace*{-10pt}
\hrulefill
\begin{subequations}
\fontsize{7.7}{9}\selectfont
\begin{alignat}{6}
\multispan2{\textsc{Input:} $\bar{x}, \bar{u}$ (feasible) \hfil} & 
\multispan2{\textsc{Input:} $\bar{x}, \bar{u}, \gray{\bar{\lambda}}$\hfil} & \multispan2{\textsc{Input:} $\bar{x}, \bar{u}$ (feasible)\hfil} 
\nonumber 
\\[8pt]
\multispan2{\textsc{DDP – backward sweep} \hfil} & 
\multispan2{\textsc{Multiple Shooting – backward sweep} \hfil} & \multispan2{\textsc{Single Shooting – backward sweep}\hfil} 
\nonumber 
\\[8pt]
\bar{P}_{N}, \bar{p}_N & = \text{ via eq. \eqref{eq:linPN} and \eqref{eq:linpN}}  &
\bar{P}_{N}, \bar{p}_N & = \text{ via eq. \eqref{eq:linPN} and \eqref{eq:linpN}}  &
\bar{P}_{N}, \bar{p}_N & = \text{ via eq. \eqref{eq:linPN} and \eqref{eq:linpN}}  &
\\
\gray{\bar{\lambda}_{N}} & \gray{= p_N + P_N \bar{x}_N,} &
&&
\gray{\bar{\lambda}_{N}} & \gray{= p_N + P_N \bar{x}_N,} 
\\[5pt]
A_i\!, B_i\!, a_i\! & = \text{ via eq. \eqref{eq:linA} and \eqref{eq:lina}} &
A_i\!, B_i\!, a_i\! & = \text{ via eq. \eqref{eq:linA} and \eqref{eq:lina}} &
A_i\!, B_i\!, a_i\! & = \text{ via eq. \eqref{eq:linA} and \eqref{eq:lina}}
\\
Q_i\!, R_i\!, S_i\! & = \text{ via eq. \eqref{eq:linQRSggn} \gray{or \eqref{eq:linQRSeh}}} &
Q_i\!, R_i\!, S_i\! & = \text{ via eq. \eqref{eq:linQRSggn} \gray{or \eqref{eq:linQRSeh}}} &
Q_i\!, R_i\!, S_i\! & = \text{ via eq. \eqref{eq:linQRSggn} \gray{or \eqref{eq:linQRSeh}}} 
\\
q_i, r_i & = \text{ via eq. \eqref{eq:linq} and \eqref{eq:linr}}  &
q_i, r_i  & = \text{ via eq. \eqref{eq:linq} and \eqref{eq:linr}}&
q_i, r_i  & = \text{ via eq. \eqref{eq:linq} and \eqref{eq:linr}}
\\
P_i, p_i & = \text{ via eq. \eqref{eq:P-recursion} and \eqref{eq:p-recursion}} &
P_i, p_i & = \text{ via eq. \eqref{eq:P-recursion} and \eqref{eq:p-recursion}} &
P_i, p_i & = \text{ via eq. \eqref{eq:P-recursion} and \eqref{eq:p-recursion}} 
\\
K_i, k_i & = \text{ via eq. \eqref{eq:K} and \eqref{eq:k}} &
K_i, k_i & = \text{ via eq. \eqref{eq:K} and \eqref{eq:k}} &
K_i, k_i & = \text{ via eq. \eqref{eq:K} and \eqref{eq:k}}
\\
\gray{\bar{\lambda}_{i}} & \gray{= p_i + P_i \bar{x}_i,} &
&&
\gray{\bar{\lambda}_{i}} & \gray{= p_i + P_i \bar{x}_i,}
\\
\multispan2{where $i=N-1, \ldots, 0$,\hfil} &
\multispan2{where $i=N-1, \ldots, 0$,\hfil} &
\multispan2{where $i=N-1, \ldots, 0$,\hfil} \nonumber \\[12pt]
\multispan2{\textsc{DDP – forward sweep} \hfil} & \multispan2{\textsc{Multiple Shooting – forward sweep}\hfil} & \multispan2{\textsc{Single Shooting – forward sweep}\hfil} 
\nonumber \\[8pt]
x_0 & = \xzero, &
x_0 & = \xzero, &
x_0 & = \xzero, 
\\
u_i & = \bu_i + k_i + K_i(x_i - \bx_i), \quad\qquad &
u_i & = \bu_i + k_i + K_i(x_i - \bx_i), &
u_i & = \bu_i + k_i + K_i(\hat{x}_i - \bx_i), 
\label{eq:u-forward}\\
&&
x_{i+1} & = \bar{f}_i + A_i (x_i - \bx_i) + B_i (u_i - \bu_i), \quad\qquad&
\hat{x}_{i+1} & = \bar{f}_i + A_i (\hat{x}_i - \bx_i) + B_i (u_i - \bu_i), \qquad
\\
x_{i+1} & = f(x_i, u_i), &
&&
x_{i+1} & = f(x_i, u_i),
\\[3pt]
\multispan2{where $i=0, \ldots, N-1.$\hfil} 
&
\multispan2{where $i=0, \ldots, N-1.$\hfil} 
&
\multispan2{where $i=0, \ldots, N-1.$\hfil} 
\nonumber \\[3pt]
&&
\gray{\lambda_{i}} & \gray{= \bar p_i + \bar P_i x_i,} &
\\[3pt]
&&
\multispan2{where $i=0, \ldots, N.$\hfil} &
\nonumber \\[8pt]
\multispan2{\textsc{Output:} $x, u$ \hfil} & 
\multispan2{\textsc{Output:} $x, u, \gray{\lambda}$\hfil} & \multispan2{\textsc{Output:} $x, u$ \hfil} 
\nonumber 
\end{alignat}
\label{eq:summary}
\end{subequations}
\hrulefill
\end{table*}

\CMS solves an instance of the quadratic subproblem given in \eqref{eq:qp} in every iteration.
A summary of \MS algorithm is given in Table~\ref{tab:overview}, in the center column.
Due to the particular structure of the QP, it can be efficiently solved via a backward Riccati recursion and a forward simulation based on the linearized dynamics.
In particular, the method proceeds as follows: 
First, the nonlinear OCP is linearized at the current iterate, \eqref{eq:linA} to \eqref{eq:linQRSeh}, using an exact or GGN Hessian.
Next, a Riccati recursion is performed, which proceeds as follows:
\begin{flalign}
K_i \!& = - (R_i \!+\! B_i\T P_{i+1} B_i )\inv 
(S_i \!+\! B_i\T P_{i+1} A_i),
\label{eq:K}\\
 k_i \!&= - ( R_i \!+\! B_i\T P_{i+1} B_i )\inv
(r_i \! +\! B_i\T (P_{i+1} a_i\! +\! p_{i+1})), &
\label{eq:k} 
\\[8pt]
 P_i \!& =  Q_i + A_i\T  P_{i+1} A_i 
+
( S_i\T + A_i\T  P_{i+1} B_i)
K_i,
\label{eq:P-recursion}\\
 p_i \!& = 
q_i + A_i\T( P_{i+1} a_i + p_{i+1}) 
\nonumber \\
& \quad ~~~\,
+ K_i\T
( r_i + B_i\T( P_{i+1} a_i + p_{i+1})),
\label{eq:p-recursion}
\end{flalign}
for $i = N-1, \ldots, 0$.
Finally, a forward simulation is performed using the linearized system dynamics and the linear feedback law defined by $K_i, k_i$.
%The parameter $\alpha \in (0, 1]$ is a line search parameter reducing the step size if necessary. 
If multipliers are required, they are updated as well at this final step. 

Now turning to single shooting and DDP, summarized in the left and right column of Table~\ref{tab:overview}, we first point out that both DDP and single shooting require a feasible initial guess, which is not the case for multiple shooting.
Furthermore, note that for multiple shooting, the three steps – linearization, backward Riccati recursion, forward sweep – could be implemented sequentially.
If the exact Hessian is used, this does not hold for single shooting and DDP, where the Hessian blocks $Q_i$, $R_i$, $S_i$ of stage $i$ depend on the multiplier $\bar\lambda_{i+1}$ computed in the previous recursive step of the very same backward sweep. 
With multiple shooting, the multipliers are part of the \textit{memory} of the algorithm, which is not the case for \SS and DDP, where they need to be computed \textit{on the fly}.
Thus, linearization and backward Riccati recursion are entwined and cannot be implemented sequentially.

After the backward sweep, single shooting performs a linear forward sweep to obtain the controls and a nonlinear \textit{open-loop} simulation to obtain the states.
%The nonlinear forward sweep performed within the \SS algorithm thus corresponds to an \textit{open-loop} simulation of the system dynamics based on the \MS control inputs.
In contrast, DDP uses the nonlinear system dynamics as well as the linear feedback law defined by $K_i, k_i$ to perform a \textit{closed-loop} forward simulation of the nonlinear system.  

%Given the same feasible initial guess $(\bx, \bu)$, both \SS and DDP perform the very same backward sweep, which outputs the quantities $K_i$, $k_i$, $P_i$, $p_i$, but differ in the forward sweep.
%While \MS uses the linearized system dynamics within the forward sweep, DDP uses the nonlinear system dynamics.
%DDP, in contrast, performs a closed-loop simulation of the system dynamics based on the linear feedback law defined by $(K_i, k_i)$.
%A comparison of the forward sweep for all three methods is given in Table~\ref{table:forward-sweep}.

%\todo{
%discuss initial guess: necessarily feasible for single shooting and DDP and feasible iterates/feasibility gaps for MS
%}

\section{Convergence Analysis}
\label{sec:convergence}

In this section, we analyze the local convergence behaviour of multiple shooting, single shooting and DDP. 
In particular, we show that all three methods have the same linear contraction rate if a GGN Hessian approximation is used.
In fact, our analysis extends to any Hessian approximation based on the primal variables $(x_i, u_i)$.

We define the primal-dual iterate $z=(x, u, \lambda)$ where $\lambda$ are the multipliers associated with the equality constraints.

\begin{proposition}
Let $z^* = (x^*, u^*, \lambda^*)$ be a feasible point of \eqref{eq:nlp} at which LICQ holds.
The following statements are equivalent:
\begin{enumerate}[]
	\item[(i)] $z^*$ is KKT point of the NLP in \eqref{eq:nlp}.
	\item[(ii)] $z^*$ is a fixed point of the \MS iteration.
	\item[(iii)] $z^*$ is a fixed point of the \SS  iteration.
	\item[(iv)] $z^*$ is a fixed point of the DDP iteration.
\end{enumerate}
\end{proposition}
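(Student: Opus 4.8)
The plan is to prove the cycle of equivalences by first establishing (i) $\Leftrightarrow$ (ii), i.e.\ that the KKT points of~\eqref{eq:nlp} coincide with the fixed points of the \MS iteration, and then showing that a fixed point of \MS is simultaneously a fixed point of \SS and of DDP, and conversely. The key observation driving everything is that all three methods solve the \emph{same} QP~\eqref{eq:qp} via the \emph{same} Riccati recursion~\eqref{eq:K}--\eqref{eq:p-recursion}; they differ only in the forward sweep (linear vs.\ nonlinear open-loop vs.\ nonlinear closed-loop). So the heart of the argument is to understand what happens to the shared backward/forward machinery when the linearization point $(\bx,\bu)$ (and, for EH, $\blam$) is already a solution.

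First I would treat (i) $\Leftrightarrow$ (ii). Since \MS is an SQP-type method whose step is the solution of the equality-constrained QP~\eqref{eq:qp}, a point $z^*$ is a fixed point of the \MS iteration exactly when $(x,u)=(\bx,\bu)$ solves that QP with the associated multipliers equal to $\blam$. The QP is convex (convex costs, and the GGN/EH Hessian blocks are positive semidefinite on the relevant subspace under the stated assumptions), so its KKT conditions are necessary and sufficient for optimality. Writing out the QP-KKT system and substituting the definitions~\eqref{eq:linA}--\eqref{eq:linPN} of $A_i,B_i,a_i,q_i,r_i,p_N$ at the point $(\bx,\bu)$, all the affine ``correction'' terms (the $-Q_i\bx_i - (S_i)\T\bu_i$ pieces, $a_i = f_i(\bx_i,\bu_i)-A_i\bx_i-B_i\bu_i$, etc.) are designed precisely so that the QP-KKT system at $(x,u,\lambda)=(\bx,\bu,\blam)$ reduces termwise to the NLP-KKT system of~\eqref{eq:nlp}: stationarity in $x_i$ and $u_i$, primal feasibility $x_{i+1}=f_i(x_i,u_i)$ and $x_0=\xzero$. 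LICQ at $z^*$ guarantees uniqueness of the NLP multipliers and, together with the structure, invertibility of the Riccati pivots $R_i + B_i\T P_{i+1}B_i$, so the QP step is well defined and the equivalence is clean.

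Next, (ii) $\Rightarrow$ (iii) and (ii) $\Rightarrow$ (iv): suppose $z^*$ is a \MS fixed point, so the QP solution is $(x,u)=(\bx,\bu)$ with feedback gains $K_i,k_i$. The point to verify is that at such a point the \emph{affine} terms vanish in the right way so that the three forward sweeps all return $(\bx,\bu)$. Concretely, $x_0 = \xzero = \bx_0$ (feasibility), and inductively, if $x_i = \bx_i$ then $k_i + K_i(x_i-\bx_i) = k_i$; one shows $k_i = 0$ at a fixed point because the \MS-QP solution is the zero step, hence $u_i = \bu_i$. For \MS/DDP the next linear/nonlinear state update then gives $x_{i+1}=\bx_{i+1}$ (using feasibility $\bx_{i+1}=f_i(\bx_i,\bu_i)=\bar f_i$). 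For \SS the same induction runs on $\hat x_i$; since $\hat x_i=\bx_i$ and $u_i=\bu_i$ throughout, the open-loop nonlinear simulation $x_{i+1}=f(x_i,u_i)=\bx_{i+1}$ as well. So all forward sweeps are stationary, and for the EH variants the multiplier updates $\lambda_i = p_i + P_i x_i$ reproduce $\blam_i$ (this is exactly how \SS/DDP recover the multipliers that \MS carries in memory), giving fixed points of \SS and DDP. The converses (iii) $\Rightarrow$ (i) and (iv) $\Rightarrow$ (i) follow by running the implication backwards: if the (closed-loop or open-loop) nonlinear forward sweep is stationary at $(\bx,\bu)$, then in particular $k_i=0$ and the states satisfy $x_{i+1}=f_i(x_i,u_i)$, i.e.\ feasibility holds; combined with the backward-sweep identities this forces the QP step to be zero, which by the first part is equivalent to the NLP-KKT conditions.

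The main obstacle I anticipate is the bookkeeping in the \SS and DDP cases, specifically making the induction on the forward sweep airtight and confirming that ``$k_i=0$'' really is equivalent to the QP step being zero — this uses invertibility of $R_i+B_i\T P_{i+1}B_i$, which in turn needs the positive-definiteness consequences of the convexity assumptions plus LICQ, and one must be slightly careful that the GGN Hessian (rather than the exact one) still yields a well-posed Riccati recursion. A secondary subtlety is that \SS and DDP only \emph{define} multipliers via $\lambda_i = p_i + P_i x_i$; one must check that at a fixed point this formula yields precisely the NLP multiplier $\lambda^*$ whose existence and uniqueness is guaranteed by LICQ, so that statements (iii) and (iv), which are nominally about $(x,u)$ only, correctly pin down the full primal-dual $z^*$. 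Everything else is a direct substitution into the already-displayed recursions.
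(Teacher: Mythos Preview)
Your proposal is correct. The paper, however, does not actually prove this proposition at all: its entire proof reads ``We refer to, e.g., Chapter 8.8 in \cite{Rawlings2017}.'' So your approach is not so much \emph{different} as it is \emph{the approach}, spelled out in full where the paper simply cites a textbook.

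What you do --- reducing (i)$\Leftrightarrow$(ii) to the standard SQP fact that the QP-KKT system at the linearization point collapses to the NLP-KKT system, and then handling (iii) and (iv) by an induction on the forward sweep showing $k_i=0$ at a fixed point --- is exactly the right decomposition, and it exploits the structure emphasized throughout the paper (identical backward sweep, differing forward sweeps). The one place to be slightly careful, which you flag yourself, is the direction (iii)/(iv)$\Rightarrow$(i): you need that a fixed point of \SS or DDP is automatically feasible, but this is immediate since both methods output a nonlinear simulation, so $\bx_{i+1}=f_i(\bx_i,\bu_i)$ holds by construction and the induction closes. Your remark about the multipliers $\lambda_i = p_i + P_i x_i$ recovering the unique NLP multiplier under LICQ is also the right observation to make the primal-dual statement precise. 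Compared to the paper's bare citation, your argument is self-contained and makes transparent \emph{why} the three fixed-point sets coincide, at the cost of a page of bookkeeping; the paper's choice keeps the exposition focused on the convergence-rate result that is its actual contribution.
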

\begin{proof}
We refer to, e.g., Chapter 8.8 in \cite{Rawlings2017}.
\end{proof}

All three algorithms can be defined in terms of a nonlinear parametric root-finding problem, which we will do in the following.
In a neighbourhood of a solution, the convergence behaviour of the iterates is governed by the spectral radius of the Jacobian of the solution map, which is shown in the following classical result:

\begin{theorem} \label{thm:local-contraction}
Let $\Pi:\R^{n_z} \rightarrow \R^{n_z}$ be the solution map of the nonlinear parametric root-finding problem $R(z; \bar{z}) = 0$ such that $R(\Pi(\bar{z}); \bar{z}) = 0$ and with $R$ twice continuously differentiable.

Suppose that $z^*$ is a fixed point of the iteration $z^+ = \Pi(z)$ with $\dpartial{R}{z_1}(z^*; z^*)$ nonsingular.
Let $\kappa(z^*) := \rho(J(z^*))$ be the spectral radius
of the matrix $J(z^*)$ given as
\begin{align*}
J(z^*) := -\left(\dpartial{R}{z}(z^*; z^*)\right) \inv \dpartial{R}{\bar{z}}(z^*; z^*).
\end{align*}
If $0 < \kappa(z^*) < 1$, the iterates locally converge to $z^*$ at a linear rate. 
The asymptotic convergence rate is given by $\kappa(z^*)$. 
If $\kappa(z^*) = 0$, the iterates locally converge to $z^*$ at a superlinear rate.
%/quadratic rate if $\dpartial{R}{z}$ is locally Lipschitz}. 
If $\kappa(z^*) > 1$, the fixed point $z^*$ is unstable. 
\end{theorem}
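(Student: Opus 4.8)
The plan is to reduce the statement to the classical local convergence theory of fixed-point iterations by first identifying the linearization of the iteration map. Since $R$ is twice continuously differentiable and $\dpartial{R}{z}(z^*;z^*)$ (the derivative with respect to the first argument) is nonsingular, the implicit function theorem guarantees that $\Pi$ is well defined and (twice) continuously differentiable on a neighbourhood of $z^*$; differentiating the identity $R(\Pi(\bar z);\bar z)=0$ with respect to $\bar z$ and evaluating at $\bar z = z^*$ gives precisely $D\Pi(z^*) = -\big(\dpartial{R}{z}(z^*;z^*)\big)\inv \dpartial{R}{\bar z}(z^*;z^*) = J(z^*)$. Thus it suffices to prove: for a $C^1$ map $\Pi$ with fixed point $z^*$ and $\kappa := \rho(D\Pi(z^*))$, the iteration $z^+ = \Pi(z)$ behaves as claimed.

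For the convergence part I would use the adapted-norm trick: for every $\varepsilon>0$ there is a vector norm on $\R^{n_z}$ whose induced operator norm satisfies $\|J(z^*)\|_\varepsilon \le \kappa+\varepsilon$ (obtained from a suitably rescaled Jordan basis). Assume first $0<\kappa<1$ and fix $\varepsilon$ with $\kappa+\varepsilon<1$. By continuity of $D\Pi$ there is a $\|\cdot\|_\varepsilon$-ball $B$ around $z^*$ on which $\|D\Pi(z)\|_\varepsilon \le \kappa+\varepsilon$; integrating along the segment from $z^*$ to any $z\in B$ yields $\|\Pi(z)-z^*\|_\varepsilon \le (\kappa+\varepsilon)\,\|z-z^*\|_\varepsilon$. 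Hence $B$ is forward invariant and the iterates contract geometrically with factor $\kappa+\varepsilon$; since $\varepsilon$ is arbitrary, $\limsup_k\|z_k-z^*\|^{1/k}\le\kappa$. The case $\kappa=0$ follows by applying the same estimate to the $m$-fold composition $\Pi^{\circ m}$, where $m$ is chosen so that $D\Pi(z^*)^m = 0$ (a spectral-radius-zero matrix is nilpotent); since $z^*$ is a fixed point, the chain rule gives $D(\Pi^{\circ m})(z^*)=D\Pi(z^*)^m=0$, so $\|\Pi^{\circ m}(z)-z^*\| = \bigO(\|z-z^*\|^2)$, which together with a uniform linear bound on the $m$ intermediate steps gives root-superlinear convergence of the whole sequence.

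To obtain the matching lower bound $\limsup_k\|z_k-z^*\|^{1/k}\ge\kappa$ when $0<\kappa<1$, I would take an eigenvalue $\mu$ of $J(z^*)$ with $|\mu|=\kappa$, together with a real invariant subspace $V$ (one-dimensional if $\mu\in\R$, two-dimensional otherwise), and start the iteration at $z^*+\delta v$ for $v\in V$ small. Writing $z_{k+1}-z^* = J(z^*)(z_k-z^*) + \bigO(\|z_k-z^*\|^2)$ and propagating the quadratic remainders through the linear dynamics, the remainder contributes only $\bigO(\delta^2\kappa^k)$, while the linear term has norm comparable to $\delta\kappa^k$; hence $\|z_k-z^*\|\ge c\,\delta\,\kappa^k$ for $\delta$ small, giving the claim. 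Finally, for $\kappa>1$ the instability is the mirror image: restricting to the unstable invariant subspace of $J(z^*)$ (eigenvalues of modulus $>1$) and passing to an adapted norm, $\Pi$ maps a thin cone around that subspace into itself while expanding norms by a factor $>1$ near $z^*$, so any nonzero perturbation along the unstable directions is magnified until the iterate leaves a fixed neighbourhood; thus $z^*$ is not stable.

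I expect the delicate points to be the sharp lower bound on the asymptotic rate and the instability claim: the upper bound and the superlinear case are routine once the adapted norm is in place, but the exact rate requires controlling the nonlinear remainder relative to the (possibly rotating, complex) linear dynamics on the critical eigenspace, and the instability argument needs the analogous cone-invariance estimate on the unstable subspace — essentially the elementary part of the Hadamard–Perron / unstable-manifold machinery. Both can alternatively be cited from Ostrowski's theorem in Ortega and Rheinboldt.
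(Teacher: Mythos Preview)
Your proposal is correct and follows essentially the same approach as the paper: compute $D\Pi(z^*)=J(z^*)$ via the implicit function theorem applied to $R(\Pi(\bar z);\bar z)=0$, then invoke the classical spectral-radius criterion for fixed-point iterations. The only difference is that the paper simply cites Ostrowski for the latter step, whereas you sketch the adapted-norm and invariant-subspace arguments in detail.
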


\begin{proof} 
A Taylor expansion of the solution map  $\Pi(z)$ at the fixed point $z^*$ yields
\begin{align*}
z^{k+1}-z^* 
& = \dtotal{\Pi}{\bar{z}} (z^*) (z^k - z^*)  
+ \mathcal{O}\left(\Vert z^k - z^* \Vert^2\right).
\end{align*}
The derivative $\dtotal{\Pi}{\bar{z}} (z^*)$ is given by the matrix $J(z^*)$, which follows from the implicit function theorem.
A standard result of linear stability analysis of nonlinear systems shows that local convergence of the iterates $z^k$ to $z^*$ is determined by the spectral radius $\rho(J(z^*))$, (compare e.g. \cite{Ostrowski1973}).

%\todo{ Consider the quadratic rate
%\begin{align*}
%z^{k+1}-z^* 
%& = \dtotal{\Pi}{\bz} (z^*) (z^k - z^*) + 
%\tfrac{1}{2}\grad{}^2 \Pi (z^*)  (z^k - z^*)(z^k - z^* ) \\
%& 
%+ \mathcal{O}\left(\Vert z^k - z^* \Vert^3\right).
%\end{align*}
%}

\end{proof}

The following lemma will allow us to show that the matrix $J(z^*)$ in Theorem~\ref{thm:local-contraction} is the same for all three methods.

\begin{lemma} \label{lem:partial-derivatives}
We consider two twice continuously differentiable functions $R_1: \R^m \times \R^m \rightarrow \R^l$, $(x, y) \mapsto R_1(x,y)$ and $R_2:\R^m \times \R^m \rightarrow \R^l$, $(x, y) \mapsto R_2(x,y)$.
If it holds that 
\begin{align*}
R_1(z, z) = R_2(z,z), \qquad
\dpartial{R_1}{x} (z, z) = \dpartial{R_2}{x}(z, z),
\end{align*}
for all $z \in \R^m$, then 
$R_1(x, y) - R_2(x, y) = \bigO\left(\Vert x - y\Vert^2\right)$,
and in particular
$\dpartial{R_1}{y} (z, z) = \dpartial{R_2}{y}(z, z)$.
\end{lemma}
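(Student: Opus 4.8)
The plan is to reduce everything to the difference function $D := R_1 - R_2 : \R^m \times \R^m \to \R^l$, which is again twice continuously differentiable. The two hypotheses say exactly that $D(z,z) = 0$ and $\dpartial{D}{x}(z,z) = 0$ for every $z \in \R^m$. I would first dispatch the ``in particular'' statement, which is the easy half and does not even need the $C^2$ assumption: differentiating the identity $D(z,z) = 0$ with respect to $z$ and using the chain rule gives $\dpartial{D}{x}(z,z) + \dpartial{D}{y}(z,z) = 0$ for all $z$, and since the first summand vanishes by hypothesis, so does the second, i.e. $\dpartial{D}{y}(z,z) = 0$, which is precisely $\dpartial{R_1}{y}(z,z) = \dpartial{R_2}{y}(z,z)$.

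For the quadratic bound I would Taylor-expand $D$ along the segment joining the diagonal point $(y,y)$ to $(x,y)$. Define $g(t) := D\big(y + t(x - y),\, y\big)$ for $t \in [0,1]$, so that $g(0) = D(y,y) = 0$ and $g'(0) = \dpartial{D}{x}(y,y)\,(x-y) = 0$. The integral form of Taylor's theorem then gives $D(x,y) = g(1) = \int_0^1 (1-t)\, g''(t)\, \mathrm{d}t$, where $g''(t)$ is the quadratic form in $x - y$ assembled from the second-order partial derivatives of $D$ evaluated at $\big(y + t(x-y),\, y\big)$. Because $D$ is $C^2$, these second derivatives are continuous and hence bounded on a neighbourhood of the diagonal (and on any bounded set), which yields $\Vert D(x,y) \Vert \le C\, \Vert x - y \Vert^2$ there; that is, $R_1(x,y) - R_2(x,y) = \bigO\big(\Vert x - y \Vert^2\big)$.

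There is no real obstacle in the argument; the only point that deserves a word of care is the precise meaning of the $\bigO$ symbol, since the constant $C$ must be uniform in $(x,y)$ over the region considered. This uniformity is exactly what continuity — hence local boundedness — of the second derivatives of the $C^2$ maps $R_1$ and $R_2$ provides through the Taylor remainder. One could equivalently run a first-order Taylor expansion of $D$ about $(y,y)$ directly in the pair of arguments: the only first-order coefficient multiplying a nonzero increment is $\dpartial{D}{x}(y,y) = 0$, so the linear term again drops out and the remainder is $\bigO(\Vert x - y \Vert^2)$. Note finally that the second assertion of the lemma is in fact a consequence of the first, since $R_1(x,y)-R_2(x,y)=\bigO(\Vert x-y\Vert^2)$ forces the derivative of $R_1-R_2$ to vanish on the diagonal; I would nevertheless keep the direct chain-rule argument above, as it is shorter and self-contained.
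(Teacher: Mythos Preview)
Your argument is correct and, for the quadratic bound, essentially identical to the paper's: both expand $R_1-R_2$ in the first argument about the diagonal point $(y,y)$ so that the zeroth- and first-order terms vanish by hypothesis, leaving an $\bigO(\Vert x-y\Vert^2)$ remainder. The one genuine difference is how you obtain $\dpartial{R_1}{y}(z,z)=\dpartial{R_2}{y}(z,z)$: the paper deduces it \emph{from} the quadratic bound by computing the directional derivative in $y$ via a difference quotient and letting the $\bigO(\Vert x-y\Vert^2)$ term kill the limit, whereas you bypass this entirely with the chain rule applied to the identity $D(z,z)\equiv 0$. Your route is shorter, needs only $C^1$, and makes clear that the two conclusions of the lemma are in fact logically independent; the paper's route has the virtue of presenting the $\partial_y$ statement as a direct corollary of the $\bigO$ estimate, which is how it is phrased in the lemma.
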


\begin{proof}
A first-order Taylor expansion of $R_1(x, y)$ in $x$ around the linearization point $\bx \in \R^m$ yields
\begin{align*} %\label{eq:taylorV1V2}
\begin{aligned}
R_1(x, y) &= \!R_1(\bx, y) + \tfrac{\partial R_1}{ \partial x}(\bx, y)(x - \bx) + \bigO\!\left(\Vert x - \bx\Vert^2\right)\!,
\\
R_2(x, y) &= \!R_2(\bx, y) + \tfrac{\partial R_2}{\partial x}(\bx, y)(x - \bx) + \bigO\!\left(\Vert x - \bx\Vert^2\right)\!.
\end{aligned}
\end{align*}
By subtracting these two equalities 
%in \eqref{eq:taylorV1V2} 
and setting $y = \bx$ we obtain $R_1(x, y) - R_2(x, y) = \bigO\left(\Vert x - y\Vert^2\right)$.  
Furthermore,
\begin{align*}
&\lim_{\epsilon \rightarrow 0} ~ \frac{R_1(z, z\!+\!\epsilon d) - R_1(z, z)}{\epsilon}
\\
& \qquad =
\lim_{\epsilon \rightarrow 0} ~ \frac{R_2(z, z\!+\!\epsilon d) - R_2(z, z)}{\epsilon} + \bigO(\Vert \epsilon \Vert),
\end{align*}
which implies that $\dpartial{R_1}{y} (z, z) = \dpartial{R_2}{y} (z, z)$.
\end{proof}

%We define $w=(w_0, \ldots, w_N)$, $w_i = (x_i, u_i)$, $w_N = x_N$, and $z=(z_0, \ldots, z_N)$, $z_i = (\lambda_i, w_i)$.
%We define $w=(x, u)$ and $z=(w, \lambda)$, as well as
Equipped with the above result, we now show that multiple shooting, single shooting and DDP locally converge at the same linear rate if a GGN Hessian approximation is used.
To this end, we summarize the primal variables as $w = (x, u)$ and introduce the following notation, where we set $N=2$ w.l.o.g. to keep the notation simple, 
{\setlength\arraycolsep{1.5pt}
\begin{align*}
\Flin(x, u; \bw) & = 
\begin{bmatrix}
\xzero & -& x_0\\
\bar{f}_0 + \bA_0 (x_0 - \bx_0) + \bB_0 (u_0 - \bu_0) & - & x_{1} \\
\bar{f}_1 + \bA_1 (x_1 - \bx_1) + \bB_1 (u_1 - \bu_1) & - & x_{2} \\
\end{bmatrix},
\\
F(x, u) &= 
\begin{bmatrix}
\xzero & -& x_0\\
f(x_0, u_0) & - & x_{1} \\
f(x_1, u_1) & - & x_{2} \\
\end{bmatrix},
\end{align*}
}%
denoting the linearized and nonlinear forward simulation, and
{\setlength\arraycolsep{1.5pt}
\begin{align*}
G(x, u; \bw) &= 
\begin{bmatrix}
\bu_0 + \bar k_0(\bw) + \bar K_0(\bw)(x_0 - \bx_0) &-& 	u_0 \\
\bu_1 + \bar k_1(\bw) + \bar K_1(\bw)(x_1 - \bx_1) &-& 	u_1
\end{bmatrix},
%\\
%H^\wildcard(x, u; \bw) &= 
%\begin{bmatrix}
%p_0^{\wildcard}(\bw) + P_0^{\wildcard}(\bw) x_0 &-& 	\lambda_0 \\
%p_1^{\wildcard}(\bw) + P_1^{\wildcard}(\bw) x_1 &-& 	\lambda_1 \\
%p_2^{\wildcard}(\bw) + P_2^{\wildcard}(\bw) x_2 &-& 	\lambda_2 \\
%\end{bmatrix},
\end{align*}
}%
summarizing the feedback law.
The quantities $\bar k_i(\bw)$ and $\bar K_i(\bw)$ are computed according to \eqref{eq:k} and  \eqref{eq:K} respectively.
Note that they depend only on the primal iterate $\bw$ if a GGN Hessian is used.
%Depending on whether an exact or GGN Hessian is used 
%%$G^\wildcard(x, u; \bw)$, $H^\wildcard(x, u; \bw)$, 
%$P_0^{\wildcard}(\bw)$ might be given by $P_0^{\ggn}(\bw)$ or $P_0^{\exact}(\bw)$, and similarly for all other quantities with an upper index $^\wildcard$.

\subsection{Multiple Shooting vs. DDP}

We define the \MS iteration as $w^+ = \Pi_\ms^\ggn(\bw)$ where $\Pi_\ms^\ggn(\bw)$ is the solution map of the linear root-finding problem $R_\ms^\ggn(w; \bw) = 0$ with
\begin{align} \label{eq:Rms1}
R_\ms^\ggn(w; \bw) 
= 
\begin{bmatrix}
\Flin(x, u; \bw) \\
G(x, u; \bw) 
\end{bmatrix}\!.
\end{align}

Similarly, we define the next DDP iterate via $w^+ = \Pi_\ddp^\ggn(\bw)$ where $\Pi_\ddp^\ggn(\bw)$ is the solution map of the nonlinear root-finding problem $R_\ddp^\ggn(w; \bw) = 0$ with
\begin{align} \label{eq:Rddp}
R_\ddp^\ggn(w; \bw) 
= 
\begin{bmatrix}
F(x, u) \\
G(x, u; \bw) \\
\end{bmatrix}\!.
\end{align}
Note that the only difference between \eqref{eq:Rms1} and \eqref{eq:Rss} is the first block where DDP uses the nonlinear dynamics while \MS uses the linearized dynamics.

\begin{proposition} \label{prop:ms-ddp}
Consider a KKT point $(w^*, \lambda^*)$ of the NLP in \eqref{eq:nlp} that satisfies LICQ and SOSC.
The asymptotic linear contraction rate $\kappa_\ms^\ggn(w^*)$ of the \MS iterates, obtained via $w^+ = \Pi_\ms^\ggn(w)$, is equal to the asymptotic linear contraction rate $\kappa_\ddp^\ggn(w^*)$ of the DDP iterates, obtained via $w^+ = \Pi_\ddp^\ggn(w)$.  
\end{proposition}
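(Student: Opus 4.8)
The plan is to apply Theorem~\ref{thm:local-contraction} to both iterations and show that the two Jacobian matrices $J_\ms(w^*)$ and $J_\ddp(w^*)$ coincide; their spectral radii, and hence the asymptotic rates $\kappa_\ms^\ggn(w^*)$ and $\kappa_\ddp^\ggn(w^*)$, are then equal. By the theorem it suffices to show that
\begin{align*}
\dpartial{R_\ms^\ggn}{w}(w^*; w^*) = \dpartial{R_\ddp^\ggn}{w}(w^*; w^*),
\qquad
\dpartial{R_\ms^\ggn}{\bw}(w^*; w^*) = \dpartial{R_\ddp^\ggn}{\bw}(w^*; w^*),
\end{align*}
and the main work is to deduce the second identity from the first via Lemma~\ref{lem:partial-derivatives}.

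First I would check the hypotheses of Theorem~\ref{thm:local-contraction}. Both $R_\ms^\ggn$ and $R_\ddp^\ggn$ are twice continuously differentiable on a neighbourhood of $(w^*; w^*)$: this reduces to smoothness of the Riccati quantities $\bar k_i(\bw)$ and $\bar K_i(\bw)$ in \eqref{eq:K}--\eqref{eq:k}, which follows because SOSC together with convexity of the stage costs keeps the matrices $R_i + B_i\T P_{i+1} B_i$ positive definite near the solution, so the inverses appearing in \eqref{eq:K}--\eqref{eq:k} depend smoothly on $\bw$. The block $\dpartial{R_\ms^\ggn}{w}(w^*; w^*)$ is, up to a permutation, the KKT matrix of the QP \eqref{eq:qp}, which is nonsingular under LICQ and SOSC; the same then holds for $\dpartial{R_\ddp^\ggn}{w}(w^*; w^*)$ once the identity above is established. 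Finally, $w^*$ is a fixed point of both iterations by Proposition~1.

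The core of the argument is a short comparison on the diagonal $\bw = w$. The maps $R_\ms^\ggn$ and $R_\ddp^\ggn$ differ only in their first block, $\Flin(x, u; \bw)$ versus $F(x, u)$, while the feedback block $G(x, u; \bw)$ is literally the same function in both. Setting $\bw = w$ collapses the linearized simulation onto the nonlinear one: the offsets become $\bar f_i = f_i(x_i, u_i)$ and each correction term $\bA_i(x_i - \bx_i) + \bB_i(u_i - \bu_i)$ vanishes, so $\Flin(x, u; w) = F(x, u)$ and hence $R_\ms^\ggn(w; w) = R_\ddp^\ggn(w; w)$ for all $w$. Differentiating with respect to $w$ and then restricting to $\bw = w$, the Jacobian blocks $\bA_i, \bB_i$ of the linearized dynamics equal $\dpartial{f_i}{x_i}(x_i, u_i)$ and $\dpartial{f_i}{u_i}(x_i, u_i)$, which are exactly the blocks of $\dpartial{F}{w}(w)$; therefore $\dpartial{R_\ms^\ggn}{w}(w; w) = \dpartial{R_\ddp^\ggn}{w}(w; w)$ for all $w$. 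Lemma~\ref{lem:partial-derivatives}, applied with $R_1 = R_\ms^\ggn$, $R_2 = R_\ddp^\ggn$ and the roles $x \mapsto w$, $y \mapsto \bw$, then gives $\dpartial{R_\ms^\ggn}{\bw}(w; w) = \dpartial{R_\ddp^\ggn}{\bw}(w; w)$ for all $w$. Specializing both identities to $w = w^*$ yields $J_\ms(w^*) = J_\ddp(w^*)$, so $\kappa_\ms^\ggn(w^*) = \rho(J_\ms(w^*)) = \rho(J_\ddp(w^*)) = \kappa_\ddp^\ggn(w^*)$.

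The only part that requires genuine care is the regularity bookkeeping of the second paragraph — smoothness of $\bar k_i(\cdot), \bar K_i(\cdot)$ and nonsingularity of $\dpartial{R}{w}$ near $w^*$ — since the comparison itself is almost immediate once one observes that replacing the nonlinear dynamics by their linearization at $\bw$ perturbs the root-finding map by a term that is $\bigO(\Vert w - \bw\Vert^2)$ on the diagonal and is therefore, by Lemma~\ref{lem:partial-derivatives}, invisible to the first derivative of the solution map that governs the contraction rate.
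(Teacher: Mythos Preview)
Your proof is correct and follows essentially the same route as the paper: compare $R_\ms^\ggn$ and $R_\ddp^\ggn$ on the diagonal $\bw=w$, observe that $\Flin(x,u;w)=F(x,u)$ and $\dpartial{\Flin}{w}=\dpartial{F}{w}$ there, and invoke Lemma~\ref{lem:partial-derivatives} to match the $\bw$-derivatives, then conclude via Theorem~\ref{thm:local-contraction}. You are in fact more careful than the paper in two respects --- you verify the diagonal identities for all $w$ (as Lemma~\ref{lem:partial-derivatives} requires) rather than only at $w^*$, and you check the regularity hypotheses of Theorem~\ref{thm:local-contraction}; one small remark is that nonsingularity of $\dpartial{R_\ms^\ggn}{w}$ follows more directly from the block lower-triangular structure of the forward sweep than from identifying it with a KKT matrix.
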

\begin{proof}
Theorem~\ref{thm:local-contraction} implies that it suffices to show that the partial derivatives of $R_\ms^\ggn(w; \bw)$ and $R_\ddp^\ggn(w; \bw)$ coincide at the fixed point $w^*$ in order to prove that $\kappa_\ms^\ggn(w^*) = \kappa_\ddp^\ggn(w^*)$.
We first consider the partial derivative w.r.t.~$\bw$.
From the definitions in \eqref{eq:Rms1} and \eqref{eq:Rddp} and together with
\begin{align*}
\dpartial{\Flin}{(x, u)}(x^*, u^*; w^*) = \dpartial{F}{(x, u)}(x^*, u^*; w^*),
\end{align*}
we directly obtain $\dpartial{R_\ms^\ggn}{w}(w^*; w^*) = \dpartial{R_\ddp^\ggn}{w}(w^*; w^*)$.
Together with Lemma~\ref{lem:partial-derivatives}, we conclude that also the partial derivatives w.r.t. $\bw$ coincide at $w^*$.
% i.e., we have $\dpartial{R_\ms^\ggn}{\bz}(z^*; z^*) = \dpartial{R_\ddp^\ggn}{\bz}(z^*; z\end{align*}^*)$.
\end{proof}

\subsection{Multiple Shooting vs. Single Shooting}

Let $y=(x, \hat{x}, u)$.
We define the next \SS  iterate via $y^+ = \hat{\Pi}_\sss^\ggn(\by)$ where $\hat{\Pi}_\sss^\ggn$ is the solution map of the nonlinear root-finding problem $\hat{R}_\sss^\ggn(y; \by) = 0$ with
\begin{align} \label{eq:Rss}
\hat{R}_\sss^\ggn(y; \by) 
= 
\begin{bmatrix}
F(x, u) \\
\Flin(\hat{x}, u; \bw) \\
G^\ggn(\hat{x}, u; \bw) 
\end{bmatrix}\!.
\end{align}

Similarly, we define the next \MS iterate via $y^+ = \hat{\Pi}_\ms^\ggn(\by)$ where $\hat{\Pi}_\ms^\ggn$ is the solution map of the linear root-finding problem $\hat{R}_\ms^\ggn(y; \by) = 0$ with
\begin{align} \label{eq:Rms2}
\hat{R}_\ms^\ggn(y; \by) 
= 
\begin{bmatrix}
\Flin(x, u; \bw) \\
\Flin(\hat{x}, u; \bw) \\
G^\ggn(\hat{x}, u; \bw) 
\end{bmatrix}\!.
\end{align}
Note that the definition in \eqref{eq:Rms2} is redundant as it includes the same linear forward sweep twice. 
This is necessary only for the comparison with single shooting:
In this formulation, the two residual maps \eqref{eq:Rss} and \eqref{eq:Rms2} differ only in the first block where single shooting uses a nonlinear forward simulation while multiple shooting performs the forward simulation based on the linearized dynamics.

\begin{proposition} \label{prop:ms-ss}
Consider a KKT point $(w^*\!, \lambda^*)$, $w^* \!=\! (x^*\!, u^*)$ of the NLP  \eqref{eq:nlp} that satisfies LICQ and SOSC and let $y^* = (x^*, x^*, u^*)$.
The asymptotic linear contraction rate $\kappa_\ms^\ggn(y^*)$ of the \MS iterates, obtained via $y^+ = \hat{\Pi}_\ms^\ggn(y)$, is equal to the asymptotic linear contraction rate $\kappa_\sss^\ggn(y^*)$ of the \SS  iterates, obtained via $y^+ = \Pi_\sss^\ggn(y)$.  
\end{proposition}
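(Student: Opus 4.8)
The plan is to mirror the proof of Proposition~\ref{prop:ms-ddp}. Theorem~\ref{thm:local-contraction}, applied to the two root-finding problems $\hat R_\ms^\ggn(y;\by)=0$ and $\hat R_\sss^\ggn(y;\by)=0$, reduces the claim to showing that the two residual maps in \eqref{eq:Rms2} and \eqref{eq:Rss} have the same partial derivatives with respect to $y$ and with respect to $\by$ at the common fixed point $y^*=(x^*,x^*,u^*)$, and that the (common) $y$-Jacobian there is nonsingular. First I would record the prerequisites: feasibility of $(x^*,u^*)$ gives $F(x^*,u^*)=0$ and $\Flin(x^*,u^*;w^*)=0$, the equivalence of KKT points and fixed points recalled above gives $\bar k_i(w^*)=0$ so that the $G^\ggn$-block vanishes at $y^*$, whence $y^*$ is a fixed point of both iterations; and LICQ together with SOSC make the Riccati factorizations $R_i+B_i\T P_{i+1}B_i$ in \eqref{eq:K}--\eqref{eq:k} positive definite, so that $\bar K_i(\bw),\bar k_i(\bw)$ and the $y$-Jacobian of $\hat R$ are well defined and the latter is invertible.

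Next I would compare \eqref{eq:Rms2} and \eqref{eq:Rss} block by block. Their second and third blocks, $\Flin(\hat x,u;\bw)$ and $G^\ggn(\hat x,u;\bw)$, are literally identical, so they contribute the same to every partial derivative. The only difference is the first block: single shooting uses the nonlinear simulation $F(x,u)$ whereas multiple shooting uses the linearized simulation $\Flin(x,u;\bw)$. On the diagonal $y=\by$ — that is, $x=\bx$ and $u=\bu$ — the linearization point coincides with the evaluation point, so $\Flin(x,u;\bw)=F(x,u)$ since $\bar f_i=f_i(\bx_i,\bu_i)$, and likewise $\dpartial{\Flin}{(x,u)}(x,u;\bw)=\dpartial{F}{(x,u)}(x,u)$ since $\bA_i=\dpartial{f_i}{x_i}(\bx_i,\bu_i)$ and $\bB_i=\dpartial{f_i}{u_i}(\bx_i,\bu_i)$; moreover neither first block depends on $\hat x$. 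Hence $\hat R_\ms^\ggn(y;y)=\hat R_\sss^\ggn(y;y)$ and $\dpartial{\hat R_\ms^\ggn}{y}(y;y)=\dpartial{\hat R_\sss^\ggn}{y}(y;y)$ for every $y$.

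These are precisely the hypotheses of Lemma~\ref{lem:partial-derivatives}, with $y$ in the role of its first argument and $\by$ in the role of its second, so the lemma yields $\dpartial{\hat R_\ms^\ggn}{\by}(y^*;y^*)=\dpartial{\hat R_\sss^\ggn}{\by}(y^*;y^*)$. Combined with the already established equality of the nonsingular $y$-Jacobians this gives $J_\ms(y^*)=J_\sss(y^*)$, and therefore $\kappa_\ms^\ggn(y^*)=\rho(J_\ms(y^*))=\rho(J_\sss(y^*))=\kappa_\sss^\ggn(y^*)$.

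The main obstacle I anticipate is the bookkeeping that relates the augmented maps to the genuine iteration maps named in the statement. One should check that the redundant first block of \eqref{eq:Rms2} leaves the multiple-shooting rate unchanged — it merely duplicates an equation that the linear sweep already satisfies — and, on the single-shooting side, that the equation $\Flin(\hat x,u;\bw)=0$ determines $\hat x$ as a smooth function of $(u,\bw)$ coinciding with the open-loop-simulated state at $y^*$, so that $J_\sss(y^*)$ is block triangular with the genuine single-shooting Jacobian as its only nontrivial diagonal block and hence the same spectral radius. The remaining verifications — $\bar k_i(w^*)=0$, feasibility, and the SOSC-based invertibility — are routine and run exactly as in the multiple-shooting-versus-DDP case.
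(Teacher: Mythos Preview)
Your proof is correct and follows essentially the same approach as the paper: compare $\hat R_\ms^\ggn$ and $\hat R_\sss^\ggn$ block by block, note that only the first block differs and that $F$ and $\Flin$ agree to first order on the diagonal, then invoke Lemma~\ref{lem:partial-derivatives} to obtain equality of the $\by$-derivatives and hence of the Jacobians $J$. Your version is in fact more careful than the paper's terse argument in that you verify the hypotheses of Lemma~\ref{lem:partial-derivatives} on the whole diagonal (not just at $y^*$) and flag the bookkeeping needed to relate the augmented residuals \eqref{eq:Rss}--\eqref{eq:Rms2} to the genuine iteration maps.
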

\begin{proof}
We proceed as in the proof of Proposition~\ref{prop:ms-ddp} and show that the partial derivatives of of $\hat{R}_\ms^\ggn(y; \by)$ and $\hat{R}_\sss^\ggn(y; \by)$ coincide at the fixed point $y^*$.
%We first consider the partial derivative w.r.t.~$y$.
From the definitions in \eqref{eq:Rms2} and \eqref{eq:Rss} and together with
\begin{align*}
\dpartial{\Flin}{(x, u)}(x^*, u^*; z^*) = \dpartial{F}{(x, u)}(x^*, u^*; z^*),
\end{align*}
we directly obtain $\dpartial{\hat{R}_\ms^\ggn}{y}(y^*; y^*) = \dpartial{\hat{R}_\sss^\ggn}{y}(y^*; y^*)$.
Together with Lemma~\ref{lem:partial-derivatives}, we conclude that the partial derivatives with respect to $\by$ coincide at $y^*$,  $\dpartial{\hat{R}_\ms^\ggn}{\by}(y^*; y^*) = \dpartial{R_\sss^\ggn}{\by}(y^*; y^*)$.
\end{proof}

\subsection{Local Linear Contraction Rate}

In the previous section, we have shown that multiple shooting, single shooting and DDP locally converge with the same linear rate if a GGN Hessian is used.
We now analyze the \MS iteration to further characterize this linear rate.
To this end, we consider yet another equivalent definition of the \MS iteration.
We define the next \MS iterate via $z^+ = \tilde{\Pi}_\ms^\ggn(\bz)$ where $\tilde{\Pi}_\ms^\ggn$ is the solution map of the linear root-finding problem $\tilde{R}_\ms^\ggn(z; \bz) = 0$ with
\begin{align} \label{eq:Rms-three}
\tilde{R}_\ms^\ggn(z; \bz) 
= 
\begin{bmatrix}
	V_{\qua}^\ggn(x, u; \bw) + \grad{w} \Flin(x, u; \bw) \lambda \\
	\Flin(x, u; \bw),
\end{bmatrix}
\end{align}
where $V_{\qua}^\ggn(x, u; \bw)$ is the quadratic approximation of the objective given in \eqref{eq:qp} using a GGN Hessian, i.e.,
\begin{align}
\begin{aligned}
V_{\qua}^\ggn(w; \bw) &= V(\bw) + \tfrac{\partial V}{\partial w}(\bw)(w-\bw) \\
&\qquad + \tfrac{1}{2}\left(w-\bw\right)\T H_\ggn(\bw) (w - \bw).	
\end{aligned}
\end{align}

\begin{theorem}
Consider a KKT point $z^* = (x^*, u^*, \lambda^*)$ of the NLP in \eqref{eq:nlp} that satisfies LICQ and SOSC and let $y^* = (x^*, x^*, u^*)$.
The asymptotic linear contraction rate of the \MS iterates, the \SS  iterates, and the DDP iterates is the same for all three methods and given by the smallest $\kappa$ that satisfies the condition
%\begin{align}
%\kappa^\wildcard(z^*) = \rho(J^\wildcard(z^*))
%\end{align}
%with 
%\begin{align*}
%J^\wildcard(z^*) = 
%\begin{bmatrix}
%M^\wildcard(z^*) & \Gamma(z^*)\T \\
%\Gamma(z^*) & 0
%\end{bmatrix}\inv
%\begin{bmatrix}
%E^\wildcard(z^*) & 0 \\
%0 & 0
%\end{bmatrix}
%\end{align*}
%where $M^\wildcard(z^*)$ is the Hessian approximation and $E^\wildcard(z^*)$ is the deviation from the exact Hessian. 
%Equivalently, $\kappa^\wildcard(z^*)$ can be defined in terms of the smallest $\kappa$ that satisfies the condition
\begin{align} \label{eq:kappaLMI}
-\kappa \tilde{M}^\ggn(z^*) \preceq \tilde{E}^\ggn(z^*) \preceq \kappa \tilde{M}^\ggn(z^*),
\end{align}
with $\tilde{M}^\ggn(z^*)\! =\! Z\T M^\ggn(z^*) Z$, $\tilde{E}^\ggn(z^*) \!=\! Z\T E^\ggn(z^*) Z$ with $Z$ a basis of the null space of $\grad{w} F(x^*, u^*)\T$. 
Here $M^\ggn(z^*)$ is the GGN Hessian approximation and $E^\ggn(z^*)$ is the deviation from the exact Hessian. 

%In particular, $\kappa^\ggn(z^*) = 0$ if an exact Hessian is used. 
\end{theorem}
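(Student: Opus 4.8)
The plan is to reduce the statement to the multiple shooting iteration and to evaluate the matrix $J(z^*)$ of Theorem~\ref{thm:local-contraction} explicitly for the primal--dual residual \eqref{eq:Rms-three}. By Propositions~\ref{prop:ms-ddp} and~\ref{prop:ms-ss} the multiple shooting, single shooting and DDP iterations share the same asymptotic linear rate, so it is enough to characterize the rate of $\tilde{\Pi}_\ms^\ggn$; moreover the equivalent MS residuals \eqref{eq:Rms1}, \eqref{eq:Rms2}, \eqref{eq:Rms-three} induce the same map on the variables they have in common, and in \eqref{eq:Rms-three} the dual update is slaved to the primal one, so passing to the primal--dual form does not change the spectral radius.

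First I would compute the two partial Jacobians of $\tilde{R}_\ms^\ggn$ at the fixed point $z=\bz=z^*$. Write $G=\grad{w}F(x^*,u^*)$ and use (i) feasibility and Lagrangian stationarity at $z^*$, $\Flin(w^*;w^*)=F(w^*)=0$ and $\grad{w}V(w^*)+G\lambda^*=0$; (ii) that $\Flin(\,\cdot\,;\bw)$ is affine in $w$ with sensitivity $\grad{w}\Flin(w;\bw)=\grad{w}F(\bw)$ constant in $w$; and (iii) that for the GGN variant the model Hessian equals the exact Hessian of the \emph{cost}, $M^\ggn=\grad{w}^2 V(w^*)$, since the GGN blocks \eqref{eq:linQRSggn}, \eqref{eq:linPN} are exactly the cost Hessian blocks. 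A direct calculation then gives
\begin{align*}
\dpartial{\tilde{R}_\ms^\ggn}{z}(z^*;z^*) = \begin{bmatrix} M^\ggn(z^*) & G \\ G\T & 0 \end{bmatrix}, \qquad \dpartial{\tilde{R}_\ms^\ggn}{\bz}(z^*;z^*) = \begin{bmatrix} E^\ggn(z^*) & 0 \\ 0 & 0 \end{bmatrix},
\end{align*}
where $E^\ggn(z^*)=M^\exact(z^*)-M^\ggn(z^*)$ is the curvature of the dynamics contracted with $\lambda^*$. The content of this step is a set of cancellations in $\dpartial{\tilde{R}_\ms^\ggn}{\bz}$: the $\bw$-derivative of $\grad{w}V(\bw)$ contributes $+M^\ggn$ while the $-\bw$ inside the model term $M^\ggn(w-\bw)$ contributes $-M^\ggn$; the $\bw$-derivative of $\Flin(\,\cdot\,;\bw)$ vanishes at $w=\bw$ because the shift of the expansion point is compensated exactly by the $(w-\bw)$ terms; and $\tilde{R}_\ms^\ggn$ does not depend on $\blam$ at all (this is where GGN enters), giving the zero columns. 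What survives in the top-left block is only the $\bw$-curvature of $\grad{w}\Flin(\,\cdot\,;\bw)\lambda^*$, which is $E^\ggn$. The left-hand matrix is the KKT matrix of the QP \eqref{eq:qp}; it is nonsingular under LICQ together with $\tilde{M}^\ggn:=Z\T M^\ggn Z\succ 0$, the standing condition that the Riccati recursion \eqref{eq:K}--\eqref{eq:P-recursion} is well defined.

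Then I would read off the spectrum of $J(z^*)=-\left(\dpartial{\tilde{R}_\ms^\ggn}{z}\right)\inv\dpartial{\tilde{R}_\ms^\ggn}{\bz}$. Because the $\lambda$-columns of $\dpartial{\tilde{R}_\ms^\ggn}{\bz}$ vanish, the nonzero eigenvalues come from the $w$-block, and for an eigenpair $(v_w,v_\lambda,\mu)$ with $\mu\neq 0$ the feasibility row forces $G\T v_w=0$, so $v_w=Z\xi$; projecting the stationarity row onto $\mathrm{range}(Z)$ and using $Z\T G=0$ collapses the problem to the generalized eigenproblem $\tilde{E}^\ggn\xi=-\mu\,\tilde{M}^\ggn\xi$ with $\tilde{E}^\ggn=Z\T E^\ggn Z$ (the converse lift works because the compatibility condition is the eigenrelation itself, and $\xi\neq 0$ by LICQ). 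Hence $\kappa(z^*)=\rho(J(z^*))=\max\{\,|\mu| : \det(\tilde{E}^\ggn+\mu\,\tilde{M}^\ggn)=0\,\}$. Finally, since $\tilde{M}^\ggn\succ 0$, I would factor $\tilde{M}^\ggn=LL\T$ and set $\hat{E}=L\inv\tilde{E}^\ggn L^{-\top}$, so that $\mu$ runs over (minus) the spectrum of the symmetric matrix $\hat{E}$ and thus $\kappa(z^*)=\rho(\hat{E})$; the congruence by $L\inv$ turns \eqref{eq:kappaLMI} into $-\kappa I\preceq\hat{E}\preceq\kappa I$, whose smallest feasible $\kappa$ is exactly $\rho(\hat{E})$. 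Combining this with Theorem~\ref{thm:local-contraction} (which identifies $\kappa(z^*)$ with the asymptotic rate) and Propositions~\ref{prop:ms-ddp},~\ref{prop:ms-ss} (which propagate it to single shooting and DDP) finishes the proof.

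The main obstacle is the Jacobian bookkeeping in the second step: one must carefully track which $\bw$-dependencies cancel and verify that exactly $E^\ggn$, and nothing from the cost model or from the linearized dynamics, remains in the top-left block of $\dpartial{\tilde{R}_\ms^\ggn}{\bz}$; everything afterwards is linear algebra. One should also pin down the sign conventions in $F$ and in the Lagrangian so that this residual curvature is $+E^\ggn$ rather than its negative, and confirm $\tilde{M}^\ggn\succ 0$ under the paper's assumptions.
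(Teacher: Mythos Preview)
Your proposal is correct and, for the first half, takes the same route as the paper: the equality of the three rates is reduced to multiple shooting via Propositions~\ref{prop:ms-ddp} and~\ref{prop:ms-ss}, exactly as the authors do.

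For the characterization of the rate by the LMI \eqref{eq:kappaLMI}, however, you go further than the paper. The paper's proof is a single sentence: it invokes Theorem~4.5 of \cite{Messerer2021a} applied to the root-finding residual \eqref{eq:Rms-three} and stops there. You instead supply a self-contained argument, computing $\partial\tilde{R}_\ms^\ggn/\partial z$ and $\partial\tilde{R}_\ms^\ggn/\partial\bz$ at $z^*$ explicitly, identifying the former with the KKT matrix and the latter with $\mathrm{diag}(E^\ggn,0)$, and then reducing the eigenvalue problem for $J(z^*)$ to the generalized eigenproblem $\tilde{E}^\ggn\xi=-\mu\,\tilde{M}^\ggn\xi$ on the null space of $G\T$. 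The final step, passing from the spectral radius of $(\tilde{M}^\ggn)^{-1}\tilde{E}^\ggn$ to the LMI via a Cholesky congruence, is standard. Your Jacobian bookkeeping is correct: the key cancellations you flag (the cost-model terms dropping out, $\Flin$ contributing nothing to $\partial/\partial\bw$ at the diagonal, and the absence of $\blam$ from the GGN residual) are exactly what makes only $E^\ggn$ survive. What your approach buys is a proof that does not depend on the external reference and that makes transparent \emph{why} the reduced error matrix appears; what the paper's approach buys is brevity, since the cited result already packages the Jacobian computation and the null-space reduction for generic GGN-type iterations.
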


\begin{proof}	
The fact that the local linear contraction rate is the same for all three methods follows from Proposition \ref{prop:ms-ddp} and \ref{prop:ms-ss}. 
The characterization of the rate in terms of the linear matrix inequalities in \eqref{eq:kappaLMI} follows from Theorem 4.5 in \cite{Messerer2021a} applied to the root-finding problem \eqref{eq:Rms-three}.
\end{proof}

\begin{remark}
Note that for the OCP-structured problem at hand, the error matrix $E^\ggn(z^*)$ is given as
\begin{align} \label{eq:error}
E^\ggn(z^*) = \grad{(x, u)}^2 \left((\lambda^*)\T F(x^*, u^*) \right).
\end{align}
Considering $N=3$ and reordering the optimization variables as $\tilde{w}=(x_0, u_0, x_1, u_1, x_2, u_2, x_3)$, a basis $Z$ of the null space of $\grad{\tilde{w}} F(x^*, u^*)\T$ is given by
\begin{align*}
\fontsize{9.9}{11}\selectfont
Z = 
\begin{bmatrix}
\zeros & \zeros & \zeros\\
\I & \zeros & \zeros\\
B_0^* & \zeros & \zeros\\
\zeros & \I  & \zeros\\
A_1^*B_0^* & B_1^* & \zeros\\
 \zeros &\zeros & \I  \\
A_2^*A_1^*B_0^* & A_2^*B_1^* & B_2^*\\
\end{bmatrix}.
\end{align*}
Here, the Jacobians are given as $A_i^* = \tfrac{\partial f}{\partial x}(x_i^*, u_i^*)$ and $B_i^* = \tfrac{\partial f}{\partial u}(x_i^*, u_i^*)$.
The reduced error matrix $\tilde{E}^\ggn(z^*) = Z\T E^\ggn(z^*) Z$ corresponds to the error matrix of the single shooting problem if solved in the standard dense formulation.
\end{remark}

\begin{corollary}
Propositions~\ref{prop:ms-ddp} and \ref{prop:ms-ss} imply that
\begin{align}
\Vert w^+_{\ms} - w^+_{\sss} \Vert_2 = \bigO\left(\Vert \bar w - w^*\Vert_2^2\right)\!,
\end{align}
if both methods start from a feasible initial guess $\bar w$.
The analogous result holds for multiple shooting vs. DDP and single shooting vs. DDP.
\end{corollary}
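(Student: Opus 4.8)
The plan is to establish, for each pair of methods, that the \emph{residual maps} defining the two iterations agree on the diagonal up to first order, deduce via Lemma~\ref{lem:partial-derivatives} that the residual maps differ only at second order, and then transfer this to the \emph{solution maps} using the nonsingularity hypothesis of Theorem~\ref{thm:local-contraction}. Take first multiple shooting vs.\ DDP. The proof of Proposition~\ref{prop:ms-ddp} already verifies that $R_\ms^\ggn$ and $R_\ddp^\ggn$ meet the hypotheses of Lemma~\ref{lem:partial-derivatives} in a neighbourhood of $w^*$: on the diagonal $\Flin(x,u;w) = F(x,u)$ (the affine correction terms vanish when the linearization point equals the evaluation point), the block $G$ is literally the same in \eqref{eq:Rms1} and \eqref{eq:Rddp}, and $\tfrac{\partial \Flin}{\partial(x,u)}(x,u;w) = \tfrac{\partial F}{\partial(x,u)}(x,u;w)$ there. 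Hence Lemma~\ref{lem:partial-derivatives}, with $x \leftrightarrow w$ and $y \leftrightarrow \bw$, gives $R_\ms^\ggn(w;\bw) - R_\ddp^\ggn(w;\bw) = \bigO(\Vert w - \bw\Vert_2^2)$ uniformly near $w^*$.

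To propagate this to the solution maps, set $w_\ms = \Pi_\ms^\ggn(\bw)$ and $w_\ddp = \Pi_\ddp^\ggn(\bw)$. Both maps are Lipschitz near $w^*$ with fixed point $w^*$ (implicit function theorem, as in the proof of Theorem~\ref{thm:local-contraction}), so $\Vert w_\ms - w^*\Vert_2, \Vert w_\ddp - w^*\Vert_2 = \bigO(\Vert \bw - w^*\Vert_2)$ and therefore $\Vert w_\ddp - \bw\Vert_2 = \bigO(\Vert \bw - w^*\Vert_2)$. Evaluating the residual difference at $w = w_\ddp$ and using $R_\ddp^\ggn(w_\ddp;\bw) = 0$,
\[
R_\ms^\ggn(w_\ddp;\bw) = R_\ms^\ggn(w_\ddp;\bw) - R_\ddp^\ggn(w_\ddp;\bw) = \bigO\!\left(\Vert \bw - w^*\Vert_2^2\right).
\]
Since $R_\ms^\ggn(\cdot;\bw)$ is \emph{affine} in its first argument and $R_\ms^\ggn(w_\ms;\bw) = 0$, the left-hand side equals $\tfrac{\partial R_\ms^\ggn}{\partial w}(\bw)\,(w_\ddp - w_\ms)$, and $\tfrac{\partial R_\ms^\ggn}{\partial w}(\bw)$ is invertible with uniformly bounded inverse for $\bw$ near $w^*$ (the nonsingularity hypothesis of Theorem~\ref{thm:local-contraction}, extended by continuity). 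Inverting yields $\Vert w_\ms - w_\ddp\Vert_2 = \bigO(\Vert \bw - w^*\Vert_2^2)$, which is the claim for multiple shooting vs.\ DDP.

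For multiple shooting vs.\ single shooting the argument is identical but carried out in the extended variable $y = (x,\hat x,u)$ with the residual maps $\hat R_\ms^\ggn$, $\hat R_\sss^\ggn$ of \eqref{eq:Rms2}--\eqref{eq:Rss}, whose diagonal first-order agreement is exactly what the proof of Proposition~\ref{prop:ms-ss} checks. Here one uses that a feasible initial guess $\bw = (\bx,\bu)$ corresponds to $\by = (\bx,\bx,\bu)$ — the linearized forward sweep reproduces the feasible trajectory when started at it — so $\Vert \by - y^*\Vert_2 = \bigO(\Vert \bw - w^*\Vert_2)$; the resulting $\bigO(\Vert \bw - w^*\Vert_2^2)$ bound on $\Vert \hat\Pi_\ms^\ggn(\by) - \hat\Pi_\sss^\ggn(\by)\Vert_2$ restricts to the $(x,u)$-components, and the $(x,u)$-part of $\hat\Pi_\ms^\ggn(\by)$ coincides with $\Pi_\ms^\ggn(\bw)$, giving $\Vert w^+_{\ms} - w^+_{\sss}\Vert_2 = \bigO(\Vert \bw - w^*\Vert_2^2)$. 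The single-shooting vs.\ DDP estimate then follows from the triangle inequality $\Vert w^+_{\sss} - w^+_{\ddp}\Vert_2 \le \Vert w^+_{\sss} - w^+_{\ms}\Vert_2 + \Vert w^+_{\ms} - w^+_{\ddp}\Vert_2$.

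The step I expect to require the most care is the bookkeeping that keeps the whole chain genuinely \emph{local}: one must fix a single neighbourhood of $w^*$ (resp.\ $y^*$) on which the solution maps are defined and Lipschitz, on which the affine Jacobian $\tfrac{\partial R_\ms^\ggn}{\partial w}(\bw)$ has a uniformly bounded inverse, and on which the $\bigO(\Vert w-\bw\Vert_2^2)$ constant from Lemma~\ref{lem:partial-derivatives} is uniform, so that the composed estimates are valid simultaneously. The comparison with single shooting additionally needs the small verification that feasibility of $\bw$ forces $\hat{\bx} = \bx$, so that the extended iterate stays within $\bigO$-distance $\Vert \bw - w^*\Vert_2$ of $y^*$. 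Routing both the MS/SS and, through it, the SS/DDP comparison via multiple shooting — whose residual is exactly affine in $w$ — is what avoids having to invoke a mean-value form for the nonlinear residuals $F$ appearing in $R_\ddp^\ggn$ and $\hat R_\sss^\ggn$.
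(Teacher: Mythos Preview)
Your argument is correct. The paper itself gives no proof for this corollary, merely asserting that it follows from Propositions~\ref{prop:ms-ddp} and~\ref{prop:ms-ss}, so there is nothing to compare against line by line.

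The most direct reading of the paper's claim is shorter than what you wrote: Propositions~\ref{prop:ms-ddp} and~\ref{prop:ms-ss} (together with Theorem~\ref{thm:local-contraction}) establish that the solution maps $\Pi_\ms^\ggn$, $\Pi_\ddp^\ggn$, $\hat\Pi_\ms^\ggn$, $\hat\Pi_\sss^\ggn$ share the same fixed point and the same Jacobian there. A second-order Taylor expansion of each solution map about $w^*$ (resp.\ $y^*$), as already used in the proof of Theorem~\ref{thm:local-contraction}, then gives $\Pi_\ms^\ggn(\bw) - \Pi_\ddp^\ggn(\bw) = \bigO(\Vert \bw - w^*\Vert_2^2)$ immediately, since the zeroth and first order terms cancel. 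Your route instead works at the level of the residual maps, invokes Lemma~\ref{lem:partial-derivatives} to get the $\bigO(\Vert w-\bw\Vert_2^2)$ residual difference, and then exploits that $R_\ms^\ggn(\cdot;\bw)$ is \emph{affine} to invert exactly. This buys you something real: you never need a second-order Taylor expansion of $\Pi_\ddp^\ggn$ or $\hat\Pi_\sss^\ggn$, only their Lipschitz continuity, so the regularity burden sits entirely on the multiple-shooting side where the affine structure makes it trivial. Your handling of the single-shooting comparison in the extended variable $y$, and the observation that feasibility of $\bw$ forces $\hat{\bx}=\bx$, are exactly the right bookkeeping steps; the triangle-inequality closure for SS vs.\ DDP is also the natural move.
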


\subsection{Exact Hessian Variants and Quadratic Rate}

If an exact Hessian is used, multiple shooting, single shooting and DDP  locally converge at a quadratic rate.
%Characterizing the quadratic rate 
In \cite{Albersmeyer2010}, the local convergence behaviour of \MS and \SS with exact Hessians has been discussed in a simplified setting.
The authors concluded that \MS formulations lead to faster contraction rates if the nonlinearities of the system dynamics reinforce each other, i.e. have the same direction of curvature.
Single shooting would lead to faster contraction if the concatenated nonlinearities mitigate each other.
They furthermore argued that in optimal control, the nonlinearities often reinforce each other rendering a \MS approach favorable.

\section{Further Remarks \& Discussion}

Our main result shows that if a GGN Hessian is used all three methods locally behave the same.
In the following, we discuss further differences and similarities, as well as advantages and disadvantages of the three methods.

\subsection{Initialization}

\CMS can start from an infeasible initial guess, which is not possible for single shooting and DDP.
The possibility for infeasible initialization greatly simplifies the usage of \MS in practice as no additional routines for finding a feasible initial guess are required.
Furthermore, infeasible initialization might improve convergence of the method if a rough guess of how a solution trajectory might look like is available \cite{Osborne1969}. 

%\todo{DDP: feedback law might be based on a local linearization that is not valid in particular at the end of the horizon. Illustrate?}

\subsection{GGN Hessian \& Convex-Over-Nonlinear Objectives}

We focused on a GGN Hessian approximation which comes with several advantages:
(1) The subproblems that need to be solved in each iteration are convex and can thus be solved reliably.
(2) We require only first-order derivatives of the dynamics. As second-order derivatives tend to be expensive to compute, using a GGN Hessian might significantly reduce computational cost of the method.
(3) The GGN Hessian does not require computation of the multipliers $\lambda$ reducing the complexity of the algorithm.

Moreover, the GGN approach naturally covers convex-over-nonlinear cost where $l_i$ is not convex but instead takes the form $l_i(x_i, u_i) = \psi_i(r_i(x_i, u_i))$ with $\psi_i$ convex and $r_i$ nonlinear. 
In this case, the GGN Hessian approximation is
\begin{align}
\begin{bmatrix}
Q_i^\ggn & (S_i^\ggn)\T \\ 
S_i^\ggn & R_i^\ggn
\end{bmatrix}
= \bar{J}_i(\bx_i, \bu_i)\T \grad{}^2 \phi_i(\bar{r}_i) \bar{J}_i(\bx_i, \bu_i),
\end{align}
where $\bar{r}_i = r_i(\bx_i, \bu_i)$ and $\bar{J}_i(\bx_i, \bu_i) =  \dpartial{r_i}{(x_i, u_i)}$.
If the Hessian error matrix in \eqref{eq:error} is adapted accordingly, our local convergence analysis is still valid in this more general case.

\subsection{Constraints}

Constraints might be incorporated into the problem formulation via barrier functions or penalty functions as is done e.g. in \cite{Wills2004, Zanelli2017a, Feller2017a, Marti-Saumell2020, Baumgaertner2022a}.
Note that these formulations typically lead to convex-over-nonlinear objective functions.

\subsection{Implementation Details}

\subsubsection{Forward Sweep}

In the form presented here, all three methods can be implemented in a very similar fashion if a GGN Hessian is used.
With the Riccati recursion being the same for the three methods, only the forward sweep needs to be adapted.
One advantage of \MS is the possibility to parallelize the forward sweep. 

\subsubsection{Line Search}

If a line search strategy is used \eqref{eq:u-forward} needs to be changed to
\begin{align*}
\text{DDP \& multiple shooting}&&	u_i & = \bu_i + \alpha k_i + K_i(x_i - \bx_i), \\
\text{single shooting:}	&& u_i & = \bu_i + \alpha k_i + K_i(\hat{x}_i - \bx_i), 
\end{align*}
where $\alpha \in (0,1]$ is the step size that is successively reduced until a sufficient decrease criterion is met.
For single shooting and DDP, the iterates are always feasible such that simply the cost function can be considered within a sufficient decrease approach.
For multiple shooting, however, one needs to decide on a merit function in order to combine both sufficient decrease of the cost function and infeasibility reduction into a scalar progress criterion.

%
%\subsection{Feasibility of Intermediate Iterates}
%
%For both DDP and single shooting, the intermediate iterates are feasible.
%\todo{todo: discuss advantages: easier to give guarantees, theoretical analysis, e.g. \cite{Scokaert1999}}

\section{Numerical Results}
\label{sec:examples}

In this section, we illustrate the local convergence rate of the multiple shooting, single shooting, and DDP iterates on a numerical example adopted from \cite{Chen1998}.
The continuous time dynamics are given as
\begin{align*}
\dot{x}_1  = x_2 \!+\! u\left(\mu\!+\!(1\!-\!\mu)x_1\right)\!, ~~
\dot{x}_2  = x_1 \!+\! u\left(\mu\!-\!4(1\!-\!\mu)x_2\right)\!,
\end{align*}
with $\mu = 0.7$.
We discretize the continuous dynamics using ten steps of a Runge-Kutta integrator of fourth order on an integration interval of $h=0.25$.
We use a quadratic cost and a quadratic penalty which penalizes inputs that do not satsify $|u_i| \leq u_{\max} = 1$, yielding the following objective:
\begin{align*}
V(x, u) =  \sum_{i=0} \tfrac{1}{2}x_i\T Q x_i + \tfrac{1}{2}u_i\T R u_i + \tau \beta(u_i)  + \tfrac{1}{2} x_N\T P x_N   
\end{align*} 
with $Q = \text{diag}(0.5, 0.5)$, $R=0.8$, $P=\text{diag}(10, 10)$ and penalty $\beta(u_i) = \max(0, u_i-u_{\max})^2 + \min(0, u_i+u_{\max})^2$ with $\tau=100$.
Note that the objective $V(x, u)$ comprising the cost and penalty functions is convex and a GGN Hessian can be used.
In the following, we obtain a feasible initial guess by simulating the nonlinear system forward in time using the linear feedback law obtain from an LQR which is based on the linear system obtained by linearizing at the steady state $x_{\mathrm{steady}} = 0$, $u_{\mathrm{steady}} = 0$.

In Fig.~\ref{fig:x-traj}, the state trajectory after a single iteration of multiple shooting, single shooting, and DDP is shown for a horizon length of $N=20$. 
All three methods start from the same feasible initial guess that is shown in gray.
While both the \SS  and DDP trajectory are feasible with respect to the dynamics constraints, the \MS trajectory exhibits gaps.
The difference of the \SS  iterate to the \MS iterate increases significantly along the horizon.

In Fig.~\ref{fig:emp-contraction}, the empirical linear contraction rate, which is defined as
$
\hat{\kappa} = \frac{\Vert w^{k+1} - w^k\Vert}{\Vert w^k - w^{k-1}\Vert},
$
is shown.
The exact Hessian variants require only very few iterations to reach the convergence criterion and the empirical contraction rate quickly tends to zero.
The empirical contraction rate of the three GGN variants needs a few more iterations to converge and they converges to the same value. 
%Overall, the number of iterations required to reach the convergence criterion is one order of magnitude large for the GGN variants.

Fig.~\ref{fig:quad-conv-illustration} shows the distance to the solution of the iterate as a function of the distance to the solution of the previous iterate.
In this log-log plot, the slope of the linear functions corresponds to the order of convergence.
The intercepts correspond to the rate of convergence.
The GGN variants of the three methods converge linearly at exactly the same asymptotic rate, while the three methods using an exact Hessian converge quadratically.

\begin{figure}[t]
	\centering
	\includegraphics[width=0.9\linewidth]{./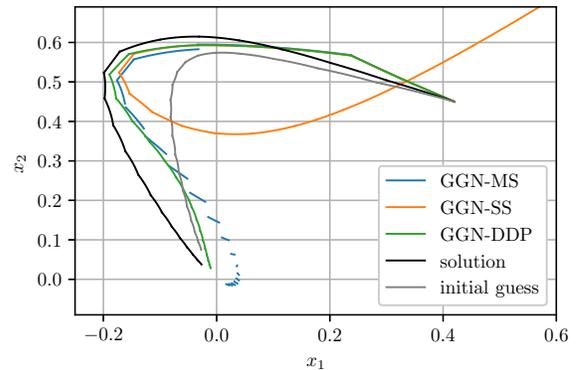}
	\vspace*{-5pt}
	\caption{
		State trajectories obtained after one iteration of GGN-MS, GGN-SS, and GGN-DDP.
		All methods start from the same feasible initial guess. 
		The initial state is $\xzero = (0.42, 0.45)$.
	}
	\label{fig:x-traj}
\end{figure}

\begin{figure}[t]
	\centering
	\includegraphics[width=0.9\linewidth]{./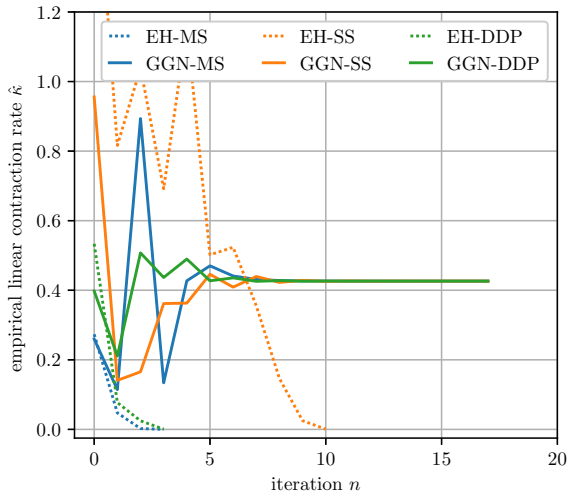}
	\vspace*{-5pt}
	\caption{
		Empirical contraction rate $\hat{\kappa}$ for the exact Hessian and GGN Hessian variant of MS, SS and DDP.
		The convergence criterion is $\Delta w \leq  10^{\sm 12}$.
	}
	\label{fig:emp-contraction}
\end{figure}

\begin{figure}[t]
	\centering
	\includegraphics[width=0.85\linewidth]{./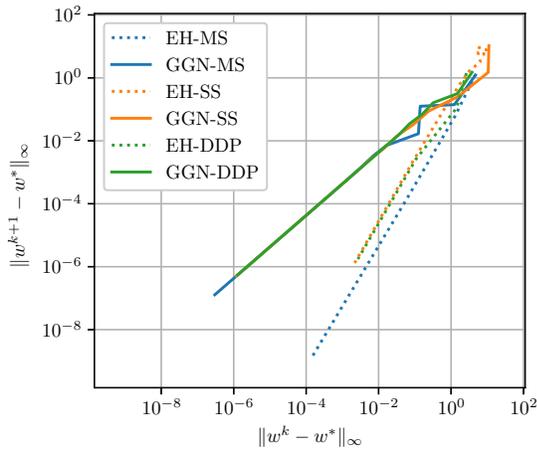}
	\vspace*{-5pt}
	\caption{
		Norm of the primal step as a function of the norm of the previous primal step.
		The slope of the linear function corresponds to the order of convergence.
		The intercept corresponds to the rate of convergence.
	}
	\label{fig:quad-conv-illustration}
\end{figure}

\section{Conclusions and Outlook}

We provided a unified view on multiple shooting, single shooting and DDP -- three classical methods for solving unconstrained discrete optimal control problems -- from a numerical optimization perspective.
Depending on the choice of Hessian approximation used within these methods, different algorithmic variants may be obtained.
We focused on the Generalized Gauss-Newton (GGN) Hessian, a generalization of the widely used Gauss-Newton (GN) Hessian, to convex loss functions.
We showed that the iterates obtained with the three methods locally converge – or diverge – to a KKT point of the OCP a the same linear rate.

\bibliography{extracted}

\end{document}